\pgfplotsset{width=7cm,compat=1.9}
\numberwithin{equation}{section}
\newtheorem{theorem}[equation]{Theorem}
\newtheorem*{thm}{Theorem}
\newtheorem{corollary}[equation]{Corollary}
\theoremstyle{definition}
\newtheorem{rmk}[equation]{Remark}
\newtheorem{rmks}[equation]{Remarks}
\newtheorem{eg}[equation]{Example}
\newenvironment{example}[1][]{\begin{eg}[#1] \pushQED{\qed}}{\popQED \end{eg}}
\newtheorem{defn}[equation]{Definition}
\newenvironment{definition}[1][]{\begin{defn}[#1]\pushQED{\qed}}{\popQED \end{defn}}
\newtheorem{altdefn}[equation]{Alternative Definition}
\newtheorem{ques}[equation]{Question}
\newtheorem{notn}[equation]{Notation}
\newenvironment{notation}[1][]{\begin{notn}[#1]}{\popQED \end{notn}}
\renewcommand{\phi}{\varphi}
\newcommand{\comment}[1]{}
\def\Ddots{\mathinner{\mkern1mu\raise\p@
\vbox{\kern7\p@\hbox{.}}\mkern2mu
\raise4\p@\hbox{.}\mkern2mu\raise7\p@\hbox{.}\mkern1mu}}
\begin{document}

\title[Toric double determinantal varieties]{Toric double determinantal varieties}

\author[Alexander Blose]{Alexander Blose}
\author[Patricia Klein]{Patricia Klein}
\author[Owen McGrath]{Owen McGrath}
\author[Jackson Morris]{Jackson Morris}
\address{University of Kentucky, Department of Mathematics, Lexington, KY, USA.}
\email[Alexander Blose]{alexander.blose@uky.edu}
\address{University of Minnesota, School of Mathematics, Minneapolis, MN, USA.}
\email[Patricia Klein]{klein847@umn.edu}
\address{University of Kentucky, Department of Mathematics, Lexington, KY, USA.}
\email[Owen McGrath]{owen.mcgrath@uky.edu}
\address{University of Kentucky, Department of Mathematics, Lexington, KY, USA.}
\email[Jackson Morris]{jackson.morris@uky.edu}

\maketitle

\begin{abstract}
We examine Li's double determinantal varieties in the special case that they are toric.  We recover from the general double determinantal varieties case, via a more elementary argument, that they are irreducible and show that toric double determinantal varieties are smooth.  We use this framework to give a straighforward formula for their dimension.  Finally, we use the smallest nontrivial toric double determinantal variety to provide some empirical evidence concerning an open problem in local algebra.
\end{abstract}

\section{Introduction}  Double determinantal varieties, introduced by L. Li, are projective varieties that arise naturally from representation theory.  In particular, they are a special case of Nakajima quiver varieties, which were introduced to study Kac-Moody Lie algebras.  For background on Nakajima quiver varieties, see \cite{Nak} or \cite{Gin}, and for background on double determinantal varieties, see \cite{Li}.  A double determinantal variety is defined by the vanishing of minors of a fixed size in a concatenation of finitely many $m \times n$ matrices glued along their length $m$ edges together with the vanishing of minors of a possibly different size in a concatenation of the same matrices along their length $n$ edges (see Definition \ref{doubleDeterminantalVarieties}).  They were recently shown to be normal, irreducible, and arithmetically Cohen--Macaulay \cite{FK}.  

In this note, we study the special case of toric touble determinantal varieties, i.e. double determinantal varieties in which the minors in both the horizontal and vertical concatenations are size $2$.  For background on toric varieties, we refer the reader to \cite{CLS11}.  Our main results are the following:

\begin{thm}(Theorem \ref{thm: prime})
Toric double determinantal varieties are irreducible.
\end{thm}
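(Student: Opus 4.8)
The plan is to identify the toric double determinantal variety $X$ with an explicit Segre image, from which irreducibility is immediate. Let $r$ denote the number of matrices being concatenated, and index the homogeneous coordinates of the ambient $\mathbb P^{mnr-1}$ by triples $x_{ijk}$, $i\in[r]$, $j\in[m]$, $k\in[n]$, where $x_{ijk}$ is the $(j,k)$ entry of the $i$th matrix. The horizontal concatenation is then the $m\times rn$ matrix $H$ with $H_{j,(i,k)}=x_{ijk}$, the vertical concatenation is the $rm\times n$ matrix $V$ with $V_{(i,j),k}=x_{ijk}$, and by Definition~\ref{doubleDeterminantalVarieties} the variety $X$ is cut out by the $2\times2$ minors of $H$ together with those of $V$. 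Consider the monomial morphism
\[
 \psi\colon \mathbb P^{m-1}\times\mathbb P^{n-1}\times\mathbb P^{r-1}\longrightarrow \mathbb P^{mnr-1},\qquad \psi\bigl([a],[b],[\lambda]\bigr)_{ijk}=a_jb_k\lambda_i,
\]
that is, the Segre embedding. I would prove that $X=\im\psi$ as sets; since the source is irreducible and $\psi$ is a morphism, this shows at once that $X$ is irreducible.

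The inclusion $\im\psi\subseteq X$ is a one-line substitution: every $2\times2$ minor of $H$ is, up to sign, of the form $x_{i_1j_1k_1}x_{i_2j_2k_2}-x_{i_1j_2k_1}x_{i_2j_1k_2}$, every one of $V$ of the form $x_{i_1j_1k_1}x_{i_2j_2k_2}-x_{i_1j_1k_2}x_{i_2j_2k_1}$, and both vanish on $x_{ijk}=a_jb_k\lambda_i$. For the reverse inclusion, lift a point of $X$ to a nonzero vector $(x_{ijk})$. Then $H\neq0$, so vanishing of its $2\times2$ minors gives $\rank H=1$, hence $x_{ijk}=a_jc_{ik}$ for some nonzero $a\in k^m$ and nonzero $c=(c_{ik})\in k^{rn}$. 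Substituting this into the $2\times2$ minors of $V$ yields $a_ja_{j'}\bigl(c_{ik}c_{i'k'}-c_{ik'}c_{i'k}\bigr)=0$ for all indices; choosing $j=j'$ with $a_j\neq0$ forces the nonzero $r\times n$ matrix $(c_{ik})$ to have rank $1$, so $c_{ik}=\lambda_ib_k$ and therefore $x_{ijk}=a_jb_k\lambda_i$. This proves $X=\im\psi$, hence $X$ is irreducible, and it realizes $X$ as the Segre variety $\mathbb P^{m-1}\times\mathbb P^{n-1}\times\mathbb P^{r-1}$ — the description that also makes the later smoothness and dimension statements transparent.

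For the scheme-theoretic strengthening (primeness of the ideal $I$ generated by the two families of minors), I would show that $I$ equals the homogeneous ideal of the Segre variety $\im\psi$, which is prime. One inclusion is clear. For the other, observe that a $2\times2$ minor of $H$ allows one to interchange the $j$-indices of two variables occurring in a monomial, a $2\times2$ minor of $V$ allows one to interchange their $k$-indices, and performing both in succession interchanges their $i$-indices; hence $I$ contains the $2\times2$ minors of all three flattenings of the tensor $(x_{ijk})$, and these are known to generate the Segre ideal. A self-contained variant avoids that classical fact: the same ``sorting'' moves reduce every monomial modulo $I$ to a sorted monomial $x_{i_1j_1k_1}\cdots x_{i_dj_dk_d}$ with $i_1\le\cdots\le i_d$, $j_1\le\cdots\le j_d$, $k_1\le\cdots\le k_d$; a sorted monomial is uniquely recovered from its image under $x_{ijk}\mapsto a_jb_k\lambda_i$ (the image records the three multisets of indices, and a sorted monomial is the zip of their sorted versions); so the sorted monomials span $k[x]/I$ and are linearly independent modulo the kernel of that map, and a Hilbert-function comparison forces $I$ to equal the kernel.

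For the theorem as stated everything above is elementary, and the only step needing care is the rank-propagation argument in the second paragraph — it must be uniform over the ground field and uses only that a rank-one matrix factors as an outer product. The genuine work lies in the primeness refinement: either the classical generation of the Segre ideal by $2\times2$ minors of flattenings, or, in the self-contained route, the proof that the quadratic sorting moves always terminate at a sorted monomial, i.e.\ that the minors of $H$ and $V$ form a Gr\"obner basis for a suitable sorting term order. That last point is the quadratic Gr\"obner basis of the Segre embedding, and it is where I would expect the main obstacle.
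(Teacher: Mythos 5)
Your proof is correct and takes a genuinely different route from the paper's. The paper proves primeness of $I$ by a two-step contraction argument: first mod out $I_2(H)$ to land in the subring $A = \kappa[s_i t_j]$ of a polynomial ring $U$ in the $s$'s and $t$'s, then show by explicit element-chasing that the image of $I_2(V)$ in $A$ equals the contraction to $A$ of the prime ideal $I_2(T)$, where $T$ is the generic $r\times n$ matrix built from the $t$'s alone. You instead identify $\mathcal{X}$ outright, as a set, with the image of the three-factor Segre $\mathbb{P}^{m-1}\times\mathbb{P}^{n-1}\times\mathbb{P}^{r-1}$; your rank-propagation argument (factor $H$ as an outer product, substitute into the $V$-minors, choose $j=j'$ with $a_j\neq 0$ to force $(c_{ik})$ to have rank one) is correct and complete, and it already yields the irreducibility asserted. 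For the scheme-theoretic strengthening that $I$ is prime, your observation that an $H$-swap followed by a $V$-swap interchanges the $i$-indices is right, so $I$ does contain the $2$-minors of the third flattening; invoking the classical generation of the Segre ideal by flattening minors, or your sorting argument, then finishes. (The sorting route is less forbidding than you anticipate: sort the $j$'s with $H$-minors, then the $k$'s with $V$-minors, which preserves the $j$-sort, then the $i$'s with the composite moves, which preserves both, and compare Hilbert functions against the known Segre coordinate ring; no general Gr\"obner-basis machinery is needed.) The trade-off is that the paper's argument is self-contained, using only primeness of $2$-minors of a single generic matrix, whereas yours either cites the classical Segre generation fact or carries out the sorting computation; in exchange, your approach exposes the Segre product structure of $S$ explicitly, which would render the paper's later Theorem~\ref{thm: smooth} and Corollary~\ref{cor: dim(X)} immediate rather than requiring separate proofs.
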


We give an elementary proof, which recovers of the irreducible result of \cite[Corollary 4.7]{FK} in the toric case.

\begin{thm} (Theorem \ref{thm: smooth})
Toric double determinantal varieties are smooth projective varieties.
\end{thm}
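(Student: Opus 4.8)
The plan is to show that a toric double determinantal variety is, as a projective variety, a Segre product of projective spaces, and therefore smooth. Fix notation: let the variety be defined by $N$ matrices $M_1,\dots,M_N$ of a common size $m\times n$, subject to the vanishing of all $2\times 2$ minors of the horizontal concatenation $[\,M_1\mid\cdots\mid M_N\,]$ (of size $m\times Nn$) and of all $2\times 2$ minors of the vertical concatenation (of size $Nm\times n$). Record the entries as a tensor $T=(T_{ipq})$ with $T_{ipq}=(M_i)_{pq}$, living in the $Nmn$-dimensional space $\CC^{N}\otimes\CC^{m}\otimes\CC^{n}$; the two families of minors then say exactly that the $\CC^{m}$-flattening and the $\CC^{n}$-flattening of $T$ have rank $\le 1$. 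Let $C\subseteq\CC^{N}\otimes\CC^{m}\otimes\CC^{n}$ be the affine cone cut out by these conditions, so that the toric double determinantal variety $X$ is the closed subvariety of $\mathbb{P}^{Nmn-1}$ they define.

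The crux is a parametrization of $C$: a tuple $(M_1,\dots,M_N)$ lies in $C$ if and only if there exist $v\in\CC^{m}$, $w\in\CC^{n}$, and $\mu=(\mu_1,\dots,\mu_N)\in\CC^{N}$ with $M_i=\mu_i\,v\,w^{\top}$ for all $i$. One direction is obvious. For the other: if the horizontal concatenation is $0$ then every $M_i=0$; otherwise its rank-$\le 1$ condition produces a single nonzero $v\in\CC^{m}$ spanning the column space of the whole $m\times Nn$ block, so $M_i=v\,b_i^{\top}$ for some $b_i\in\CC^{n}$, and symmetrically the vertical condition produces a single nonzero $w\in\CC^{n}$ with $M_i=c_i\,w^{\top}$. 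Comparing the two expressions, each nonzero $M_i$ has column space $\CC v$ and row space $\CC w$, hence $b_i\in\CC w$ and $M_i=\mu_i\,v\,w^{\top}$ for a scalar $\mu_i$ (take $\mu_i=0$ when $M_i=0$). Equivalently: a $3$-tensor whose $\CC^{m}$- and $\CC^{n}$-flattenings both have rank $\le 1$ is a pure tensor $\mu\otimes v\otimes w$, and its third flattening is then automatically rank $\le 1$ as well.

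Now the assignment $(v,w,\mu)\mapsto(\mu_i\,v\,w^{\top})_i$ has $(i,p,q)$-entry $\mu_i v_p w_q$, so it is precisely the affine cone over the Segre embedding $\sigma\colon\mathbb{P}^{N-1}\times\mathbb{P}^{m-1}\times\mathbb{P}^{n-1}\hookrightarrow\mathbb{P}^{Nmn-1}$, and by the parametrization its image is $C$. Passing to projective varieties, $X$ and the Segre image $\sigma\bigl(\mathbb{P}^{N-1}\times\mathbb{P}^{m-1}\times\mathbb{P}^{n-1}\bigr)$ are closed subsets of $\mathbb{P}^{Nmn-1}$ with the same points; since $X$ is reduced (part of Theorem~\ref{thm: prime}) and so is the Segre image, the two coincide as closed subschemes. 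As $\sigma$ is a closed immersion, we conclude $X\cong\mathbb{P}^{N-1}\times\mathbb{P}^{m-1}\times\mathbb{P}^{n-1}$, which is a smooth projective variety. This simultaneously recovers irreducibility and yields the clean dimension formula $\dim X=N+m+n-3$.

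I do not expect a serious obstacle. The one genuine idea is spotting that the conjunction of the two rank-$\le 1$ conditions forces the ``simultaneously rank one'' shape $M_i=\mu_i\,v\,w^{\top}$, i.e.\ recognizing the Segre structure; once that is in hand, everything is formal. The only point needing a little care is the comparison of scheme structures in the third paragraph: rather than quoting reducedness from Theorem~\ref{thm: prime}, one may instead check directly that the imposed minors generate the Segre ideal — via a one-line minor identity, the $2\times 2$ minors of the two prescribed flattenings already generate the $2\times 2$ minors of the third flattening, and the flattening minors are well known to cut out the Segre variety ideal-theoretically.
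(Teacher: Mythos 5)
Your proof is correct, and it takes a genuinely different, more global route than the paper's. The paper proves smoothness chart by chart: for each variable $x^{\ell'}_{i'j'}$ it constructs an explicit isomorphism $S[1/x^{\ell'}_{i'j'}]\cong T[1/z^{\ell'}_{i'j'}]$ onto a localized polynomial ring and concludes regularity of each homogeneous localization. You instead identify $\mathcal{X}$ outright as the Segre embedding of $\mathbb{P}^{r-1}\times\mathbb{P}^{m-1}\times\mathbb{P}^{n-1}$ in $\mathbb{P}^{rmn-1}$, by observing that $I_2(H)$ and $I_2(V)$ impose rank $\le 1$ on two of the three flattenings of the $3$-tensor $(x^\ell_{ij})$ and that this already forces the tensor to be pure. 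This single identification subsumes Theorem~\ref{thm: prime}, Theorem~\ref{thm: smooth}, and Corollary~\ref{cor: dim(X)} at once, and it also explains the paper's local computation: the ring $T[1/z^{\ell'}_{i'j'}]$ the paper builds is exactly the standard affine chart of this Segre product.

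One refinement on the scheme-theoretic comparison in your third paragraph. The route through reducedness works (both ideals are prime, the parametrization gives the same zero set over every field extension, and $\mathbb{P}^{rmn-1}_\kappa$ is Jacobson), but it makes your argument depend on Theorem~\ref{thm: prime} and hence forfeits the claim to an independent proof of irreducibility. The alternative you sketch at the end is the better one and you should lead with it: the $2\times 2$ minors of the third flattening lie in $I_2(H)+I_2(V)$ by the one-line identity
\[
x^\ell_{ij}x^{\ell'}_{i'j'}-x^{\ell}_{i'j'}x^{\ell'}_{ij}
=\bigl(x^\ell_{ij}x^{\ell'}_{i'j'}-x^{\ell}_{i'j}x^{\ell'}_{ij'}\bigr)
+\bigl(x^{\ell}_{i'j}x^{\ell'}_{ij'}-x^{\ell}_{i'j'}x^{\ell'}_{ij}\bigr),
\]
the first summand being an $H$-minor (rows $i,i'$, columns $(\ell,j),(\ell',j')$) and the second a $V$-minor (rows $(\ell,i'),(\ell',i)$, columns $j,j'$). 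Combined with the standard fact that the flattening minors generate the Segre ideal, this shows $I_2(H)+I_2(V)$ \emph{is} the Segre ideal over any field, with no appeal to primeness, reducedness, or the parametrization. That self-contained version is the cleanest, and it honestly delivers all three results of the section as corollaries of the Segre identification.
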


The above result shows that toric double determinantal varieties are strictly better behaved than double determinantal varieties in the general case.  As a corollary (Corollary \ref{cor: dim(X)}), we are able to give a formula for the dimension of a toric double determinantal varieties that is much simpler than the formula \cite[Corollary 4.5]{FK} that applies to the not necessarily toric case.  

The structure of the paper is as follows: In Section \ref{sect: definitions}, we provide definitions and fix notation.  In Section \ref{sect: theorems}, we provide proofs of the main theorems.  In Section \ref{sect: testing}, we consider the smallest nontrivial toric double determinantal variety for empirical evidence concerning an open problem in local algebra.

\subsection*{Acknowledgements}  We are thankful to Courtney George, Chris Manon, Uwe Nagel for helpful conversations.  The first author was partially supported by the University of Kentucky's Summer Research \& Creativity Fellowship, and the third and fourth authors were partially supported by the Dr. J.C. Eaves Scholarship during the writing of this paper.  

\section{Preliminaries}\label{sect: definitions}

Throughout this paper, varieties will be defined over the perfect field $\kappa$.  For a matrix $M$ and a positive integer $a$, let $I_a(M)$ denote the ideal generated by the $a$-minors in $M$.  

\begin{definition}\label{doubleDeterminantalVarieties}
  Fix $r, m, n \geq 1$, and let $X_\ell = (x^\ell_{i,j})$ with $1 \leq \ell \leq r$ be $m \times n$ matrices of distinct indeterminates.  Let $R = \kappa[x^\ell_{i,j} \mid 1 \leq i \leq m, 1 \leq j \leq n, 1 \leq \ell \leq r]$ be the standard graded polynomial ring in the indeterminates that appear in the matrices $X_\ell$ over the perfect field $\kappa$. Let $H$ be the horizontal concatenation of these matrices, i.e., the $m \times rn$ matrix $H =
    \begin{bmatrix}
      X_1 \cdots X_r\\
    \end{bmatrix}$, and let $V$ be their vertical concatenation, i.e., the $rm \times n$ matrix

  \[V =
    \begin{bmatrix}
      X_{1} \\
      \vdots \\
      X_{r} \\
    \end{bmatrix}.  
  \]

  The ideal $I = I_a(H)+I_b(V)$ generated by the $a$-minors of $H$ together with the $b$-minors of $V$ is called a \emph{double determinantal ideal}, and the variety cut out by $I$ is called a \emph{double determinantal variety}.  Because $I$ homogeneous, we may think of the variety it defines as either an affine variety or a projective variety.  
    \end{definition}
  
  This paper concerns the special case of toric double determinantal varieties, which are double determinantal varieties that are also toric:  
  \begin{definition}
  An ideal in a polynomial ring is called \emph{toric} if it is prime and has a generating set in which every element is a binomial.  A variety is called \emph{toric} if it can be defined by a toric ideal.
  \end{definition}
 
When $a = b = 2$, it is clear that the double determinantal ideal $I$ is binomial.  Theorem \ref{thm: prime} shows that it is also prime so that the term toric double determinantal ideal is not a misnomer.  Notice that when $r = 1$, we are in the case of an ordinary determinantal variety, and so we will restrict our interest to the case $r \geq 2$.

\begin{notation}\label{notation} Throughout this paper, we will let $X_1 = (x^1_{ij}), \ldots, X_{r} = (x^{r}_{ij})$ be $r$ distinct $m \times n$ matrices of indeterminates for some integers $r \geq 2$ and $m, n \geq 2$.  We will always let $H = \begin{bmatrix} X_1 & \cdots & X_{r}\end{bmatrix}$ denote their $m \times rn$ horizontal concatenation and $ V = \begin{bmatrix}X_1\\
\vdots \\ X_{r}\end{bmatrix}$ their $rm \times n$ vertical concatenation. We will take $R = \kappa[x^\ell_{ij} \vert 1 \leq i \leq n, 1 \leq j \leq m, 1 \leq \ell \leq r]$ to be the polynomial ring in the entries of $H$ (or $V$) over the field $\kappa$, $I = I_2(H)+I_2(V)$, and $S = R/I$.  Notice that $S$ is the homogeneous coordinate ring of the toric double determinantal variety cut out by $I$, which we will call $\mathcal{X}$.  
\end{notation}

\section{Proofs of Main Results}\label{sect: theorems}

We begin by giving an elementary argument that every toric double determinantal ideal is prime.  Because a toric ideal must be both prime and binomial and the ideals in question are clearly binomial, this theorem establishes that the ideals we are calling toric double determinantal ideals are actually toric.  This result recovers \cite[Corollary 4.7]{FK} in the case of $2$-minors.

\begin{theorem}\label{thm: prime}
Using Notation \ref{notation}, $I$ is prime.  In particular, $S$ is a domain, and $\mathcal{X}$ is irreducible.
\end{theorem}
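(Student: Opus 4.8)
The plan is to identify $I$ as the defining ideal of an explicit affine toric variety by realizing it as the kernel of a monomial map. Index the variables of $R$ by triples, identifying $x^\ell_{ij}$ with $(i,j,\ell)\in[m]\times[n]\times[r]$, and define a $\kappa$-algebra homomorphism
\[
\phi\colon R\longrightarrow \kappa[s_1,\dots,s_m,\,t_1,\dots,t_n,\,u_1,\dots,u_r],\qquad \phi(x^\ell_{ij})=s_it_ju_\ell .
\]
A one-line substitution shows that every $2\times 2$ minor of $H$ and every $2\times 2$ minor of $V$ lies in $\ker\phi$, so $I\subseteq P\defeq\ker\phi$; since $P$ is the kernel of a map into a domain it is prime, and the entire problem reduces to proving the reverse inclusion $P\subseteq I$. (Equivalently, the claim is that $\mathcal{X}$ is the Segre variety $\mathbb{P}^{m-1}\times\mathbb{P}^{n-1}\times\mathbb{P}^{r-1}$ embedded by $\cO(1,1,1)$, and that $I$ is its homogeneous ideal.)

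For $P\subseteq I$ I would use the standard fact that $P$, being the kernel of a monomial map, is generated by binomials $\bx^A-\bx^B$ with $\phi(\bx^A)=\phi(\bx^B)$. Viewing $A$ and $B$ as multisets of triples, the condition $\phi(\bx^A)=\phi(\bx^B)$ says precisely that $A$ and $B$ have the same multiset of first coordinates, the same multiset of second coordinates, and the same multiset of third coordinates. So it suffices to prove the combinatorial statement: \emph{whenever two multisets $A,B$ of triples agree in all three coordinatewise multisets, $\bx^A\equiv\bx^B\pmod I$.} The ingredients are the ``exchange'' binomials that $I$ contains: a $2\times 2$ minor of $H$ swaps the (column, layer) label of two triples; a $2\times 2$ minor of $V$ swaps their column labels; and --- the step that makes everything work --- combining a single $V$-minor with a single $H$-minor produces $x^\ell_{ij}x^{\ell'}_{i'j'}-x^{\ell'}_{ij}x^\ell_{i'j'}\in I$, i.e.\ an exchange of layer labels. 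In Segre language, although only two of the three families of $2\times 2$ flattening minors of the $m\times n\times r$ array appear among the defining equations, the third family also lies in $I$.

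Granting the three exchange moves, the combinatorial statement follows by induction on $|A|=|B|$, peeling off one triple at a time. Let $i^\ast$ be the smallest first coordinate occurring in $A$ (equivalently in $B$), and pick $\alpha\in A$, $\beta\in B$ with first coordinate $i^\ast$. With at most one column exchange --- pairing $\alpha$ with a triple of $A$ whose column coordinate equals that of $\beta$, which exists since the column-multisets agree --- followed by at most one layer exchange handled the same way, one turns $A$ into a multiset that contains $\beta$; every step preserves all three coordinate-multisets, so after removing $\beta$ from both $A$ and $B$ the inductive hypothesis applies. I expect the crux to be the middle paragraph, specifically the observation that the layer-exchange binomials lie in $I$ even though they are not among the given generators; once that is established, the connectivity induction is routine, and $I=P$ is prime, so $S$ is a domain and $\mathcal{X}$ is irreducible.
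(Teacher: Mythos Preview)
Your approach is correct and follows a genuinely different route from the paper's. The paper proceeds in two stages: it first uses the classical presentation $R/I_2(H)\cong\kappa[s_it_{(\ell-1)n+j}]=:A\subseteq U:=\kappa[s_i,t_j]$, and then shows that the image of $I_2(V)$ in $A$ equals the contraction $I_2(T)\cap A$, where $T=(t_{(\ell-1)n+j})$ is an $r\times n$ generic matrix in $U$; primeness then follows because contractions of primes are prime. You instead parametrize everything at once via $x^\ell_{ij}\mapsto s_it_ju_\ell$ and reduce to a combinatorial connectivity statement for the toric ideal $\ker\phi$. A significant dividend of your route is that it identifies $\mathcal{X}$ explicitly as the Segre embedding of $\mathbb{P}^{m-1}\times\mathbb{P}^{n-1}\times\mathbb{P}^{r-1}$, which would make the smoothness of Theorem~\ref{thm: smooth} and the dimension formula of Corollary~\ref{cor: dim(X)} immediate; the paper does not make this identification and instead proves those results separately by an explicit localization computation. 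Conversely, the paper's argument avoids appealing to the structure theory of kernels of monomial maps and stays entirely within elementary manipulations of $2$-minors and ideal contraction.

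One small point to tighten: your verification that the layer-exchange binomial $x^\ell_{ij}x^{\ell'}_{i'j'}-x^{\ell'}_{ij}x^\ell_{i'j'}$ lies in $I$ by adding one $H$-minor and one $V$-minor tacitly assumes $i\neq i'$ and $j\neq j'$, since otherwise one of those minors is zero. In the degenerate cases the binomial is itself already a $V$-minor (when $i=i'$), an $H$-minor (when $j=j'$), or zero (when both hold), so the conclusion survives, but you should say so explicitly. Similarly, in the induction the auxiliary triple $\gamma$ (or $\delta$) may share its $(i,\ell)$-pair (or $(i,j)$-pair) with $\alpha$, in which case the corresponding swap is trivial; again harmless, but worth a sentence.
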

\begin{proof}

It is a standard fact about minors of a generic matrix that there will an isomorphism $A:=R/I_2(H) \cong k[s_i t_j \mid 1 \leq i \leq m, 1 \leq j \leq rn]$ by the map sending $x^\ell_{ij} \mapsto s_i{t_{\ell n+j}}$ for each $1 \leq i \leq n, 1 \leq j \leq m, 1 \leq \ell \leq r$.  Therefore, $S$ is the quotient of $A$ by the image of $I_2(V)$ in $A$.  The image of $I_2(V)$ in $A$ may be described as $I_2(\widetilde{V})$, where 

\begin{center}$\widetilde{V} = \begin{bmatrix}
	s_1t_1 & s_1t_2 & \dots & s_1t_{n} \\
	\vdots & \vdots & \ddots & \vdots \\
	s_mt_1 & s_mt_2 & \dots & s_mt_{n}\\
	s_1t_{n+1} & s_1t_{n+2} & \dots & s_1t_{2n} \\
	\vdots & \vdots & \ddots & \vdots \\
        \vdots & \vdots & \ddots & \vdots \\
	s_mt_{(r-1)n+1} & s_mt_{(r-1)n+2} & \dots & s_mt_{rn} \\
	\end{bmatrix}$.
\end{center}

Hence, it suffices to show that $I_2(\widetilde{V})$ is prime in $A$.  In order to prove this, we will show that $I_2(\widetilde{V})$ is the contraction of a prime ideal of the polynomial ring in the entries of $\widetilde{V}$.  In particular, let $U = k[s_i, t_j \mid 1 \leq i \leq m, 1 \leq j \leq rn]$ and $T = \begin{bmatrix}t_1 & t_2 & \dots & t_n \\
\vdots & \vdots & \ddots & \vdots\\
t_{(r-1)n+1} & t_{(r-1)n+2} & \dots & t_{rn}\end{bmatrix}$, and consider $P = I_2(T) \subseteq U$.  We know that $P$ is prime because it is the ideal of $2$-minors of a generic matrix.  We will show that $I_2(\widetilde{V}) = P \cap A$, which will establish that $I_2(\widetilde{V})$ is prime because the contraction of a prime ideal is always a prime ideal.  Because $I_2(\widetilde{V})$ and $P \cap A$ are both homogeneous ideals, it is enough to consider their homogeneous elements when checking that $I_2(\widetilde{V}) \subseteq P \cap A$ and that $P \cap A \subseteq I_2(\widetilde{V})$.

We first consider which homogemeous elements of $U$ (using the standard grading) are also elements of $A$.    For any monomial $\mu \in U$, define \[
\mbox{deg}_s(\mu): = \max\{\alpha_1+\ldots+\alpha_m \mid s_1^{\alpha_1} \cdots s_m^{\alpha_m} \mbox{ divides } \mu\}
\]  and \[
\mbox{deg}_t(\mu): = \max \{\alpha_1+\ldots+\alpha_{rn} \mid t_1^{\alpha_1} \cdots t_{rn}^{\alpha_{rn}} \mbox{ divides } \mu\}.
\]  We observe that a monomial $\mu$ of $U$ is an element of $A$ if and only if $\mbox{deg}_s(\mu) = \mbox{deg}_t(\mu)$ and, more generally, that a homogeneous polynomial $h$ of $U$ is also an element of $A$ if and only if $\mbox{deg}_s(\mu) = \mbox{deg}_t(\mu)$ for every monomial $\mu$ occurring with nonzero coefficient in the standard expression of $h$.

Fix an arbitrary $2$-minor of the ideal $I_2(\widetilde{V})$.  These, the natural generators of $I_2(\widetilde{V})$, are of the form $(s_it_j)(s_at_b)-(s_it_{b-cn})(s_at_{j+cn})$ for some integers $i,j,a,b,c$ with $1 \leq i,a \leq m$,  and $1 \leq j,b, b-cn, j+cn \leq rn$.  By factoring out $s_is_a$, we see each natural generator of $I_2(\widetilde{V})$ expressed as $s_is_a(t_jt_b- t_{b-cn}t_{j+cn})$ for integers $i,j,a,b,c$ with $1 \leq i,a \leq m$, and $1 \leq j,b, b-cn, j+cn \leq rn$.  But every $t_jt_b- t_{b-cn}t_{j+cn}$ for integers $j,b,c$ with $1 \leq j,b, b-cn, j+cn \leq rn$ is either $0$ or one of the natural generators of $I_2(T)$.  Hence, every generator of $I_2(\widetilde{V})$ is an element of $I_2(T) \cap A$, which is to say $I_2(\widetilde{V}) \subseteq I_2(T) \cap A$.  

Conversely, fix an arbitrary homogeneous element $f \in I_2(\widetilde{V}) \cap S$.  Express $f = \mu_1 \delta_1 + \cdots+\mu_p \delta_p$, where the $\delta_i$ are (not necessarily distinct) natural generators of $I_2(\widetilde{V})$, the $\mu_i$ are monomials in $U$, and $p$ is a positive integer.    Hence, each term $\alpha_w \delta_w \in A$ if and only if $\deg_s(\alpha_w) = \deg_t(\alpha_w)+2$ because each $\deg_t(\delta_w) = 2$ while $\deg_s(\delta_w) = 0$.  Now $A$ is a homogeneous subring of $U$ under the $\mathbb{N} \times \mathbb{N}$ bigrading given by $(\deg_s, \deg_t)$, and so we may assume that each summand $\alpha_w\delta_w$ of $f$ is an element of $A$, which is to say that for each $1 \leq w \leq p$, we have $\deg_s(\alpha_w) = \deg_t(\alpha_w)+2$.  But then there exists some $1 \leq i,a \leq m$ such that $s_is_a$ divides $\alpha_w$ and $\dfrac{\alpha_w}{s_is_a}((s_is_a)\delta_w)$ is the product of a monomial $\dfrac{\alpha_w}{s_is_a}$ with $\mbox{deg}_s(\dfrac{\alpha_w}{s_is_a}) = \mbox{deg}_t(\dfrac{\alpha_w}{s_is_a})$ and $(s_is_a)\delta_w$.  That is to say that $\dfrac{\alpha_w}{s_is_a}((s_is_a)\delta_w)$ is the product of an element of $A$ and one of the natural generators of $I_2(\widetilde{V})$.  Hence, $I_2(\widetilde{V}) \supseteq I_2(T) \cap A$.

We have now seen that $I$ is the contraction of the prime ideal $I_2(\widetilde{V})$ and so is itself prime, as desired.

\end{proof}

We next describe a way in which toric double determinantal varieties are better behaved than the general case of double determinantal varieties.  Although every double determinantal variety is Cohen--Macaulay, they are not in general smooth as projective varieties, i.e. their homogeneous coordinate rings are not necessarily regular at the localization at every non-maximal homogeneous prime.  

\begin{theorem}\label{thm: smooth}
Every toric double determinantal variety is smooth as a projective variety.
\end{theorem}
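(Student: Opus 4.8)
The strategy is to exhibit the homogeneous coordinate ring $S = R/I$ explicitly as a semigroup ring (equivalently, to understand $\mathcal{X}$ as a subvariety of a product of projective spaces via an explicit parametrization) and then check smoothness either by recognizing it as a familiar smooth variety or by a direct Jacobian/local-ring computation. The key observation, already packaged in the proof of Theorem \ref{thm: prime}, is that $S \cong A/I_2(\widetilde V)$ where $A = \kappa[s_it_j]$ is the Segre-type ring, and $I_2(\widetilde V)$ was shown to be the contraction $P \cap A$ with $P = I_2(T)$ the $2$-minor ideal of the generic $r \times n$ matrix $T = (t_{(\ell-1)n+j})$. Since $R/I_2(T)$ (living in the $t$-variables alone) is the cone over the Segre embedding $\mathbb{P}^{r-1} \times \mathbb{P}^{n-1} \hookrightarrow \mathbb{P}^{rn-1}$, it is natural to expect $\mathcal{X}$ to be, up to the $\Proj$ construction, a product of projective spaces, and products of smooth varieties are smooth.

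First I would make the parametrization completely explicit: the map $x^\ell_{ij} \mapsto s_i t_{(\ell-1)n+j}$ realizing $R/I_2(H)$, composed with the Segre parametrization $t_{(\ell-1)n+j} \mapsto u_\ell v_j$ that cuts out $I_2(T)$, gives $x^\ell_{ij} \mapsto s_i u_\ell v_j$. So $S$ should be identified with the subalgebra $\kappa[s_i u_\ell v_j \mid 1\le i\le m,\ 1\le \ell\le r,\ 1\le j\le n] \subseteq \kappa[s_i, u_\ell, v_j]$. I would verify that the kernel of the surjection $R \twoheadrightarrow \kappa[s_i u_\ell v_j]$ is exactly $I$; one containment is the trivial check that each $2$-minor of $H$ and of $V$ maps to $0$, and the reverse containment follows by composing the two facts already established in the proof of Theorem \ref{thm: prime} (that $I_2(H) = \ker(R \to A)$ and that $I_2(\widetilde V) = I_2(T)\cap A$). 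With that identification in hand, I recognize $\kappa[s_i u_\ell v_j]$ as the Segre product of the three polynomial rings $\kappa[s_i]$, $\kappa[u_\ell]$, $\kappa[v_j]$, i.e. the homogeneous coordinate ring of the triple Segre embedding $\mathbb{P}^{m-1}\times\mathbb{P}^{r-1}\times\mathbb{P}^{n-1} \hookrightarrow \mathbb{P}^{mrn-1}$. Its $\Proj$ is therefore $\mathbb{P}^{m-1}\times\mathbb{P}^{r-1}\times\mathbb{P}^{n-1}$, which is smooth (projective) as a product of smooth projective varieties.

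The one point requiring care — and the likely main obstacle — is that ``toric double determinantal variety'' in this paper is defined via its homogeneous coordinate ring $S$ with its particular standard grading coming from $R$, and ``smooth as a projective variety'' should be interpreted accordingly: $S_{\mathfrak p}$ must be regular for every homogeneous prime $\mathfrak p$ other than the irrelevant maximal ideal $\mathfrak m = R_+/I$. The grading that $R$ puts on $\kappa[s_iu_\ell v_j]$ is the \emph{total} degree (each generator $s_iu_\ell v_j$ has degree $1$), which is the Segre grading on the triple product, so $\Proj S$ is genuinely $\mathbb{P}^{m-1}\times\mathbb{P}^{r-1}\times\mathbb{P}^{n-1}$ and the projective smoothness statement is literally the smoothness of that product. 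To be safe I would also note the standard fact that the affine cone minus its vertex, $\Spec S \setminus \{\mathfrak m\}$, is an $\mathbb{A}^1$-bundle (up to a finite issue) or at worst a locally trivial $\mathbb{G}_m$-quotient situation over $\Proj S$, so that $S_{\mathfrak p}$ is regular for all $\mathfrak p \ne \mathfrak m$ exactly because $\Proj S$ is smooth; this is the assertion in the excerpt's phrase ``regular at the localization at every non-maximal homogeneous prime,'' and it follows from smoothness of $\mathbb{P}^{m-1}\times\mathbb{P}^{r-1}\times\mathbb{P}^{n-1}$ together with the standard relationship between a projective variety and the punctured cone over it.

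Finally, as a corollary I would read off the dimension: $\dim \mathcal{X} = (m-1)+(r-1)+(n-1) = m+n+r-3$, giving the promised simple formula (Corollary \ref{cor: dim(X)}) in place of the general formula of \cite[Corollary 4.5]{FK}.
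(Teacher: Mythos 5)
Your proof is correct, and it takes a genuinely different route from the paper's. The paper establishes smoothness by an explicit local computation: for each indeterminate $x^{\ell'}_{i'j'}$, it constructs an isomorphism between the chart $S[1/x^{\ell'}_{i'j'}]$ and a localized polynomial ring $T[1/z^{\ell'}_{i'j'}]$, by carefully partitioning the variables into three classes $Y_1, Y_2, Y_3$ and solving for the ``free'' variables in terms of the others using the binomial generators of $I$. Your argument is instead global: you push the change of variables already implicit in the proof of Theorem \ref{thm: prime} one step further, composing $x^\ell_{ij}\mapsto s_i t_{(\ell-1)n+j}$ with $t_{(\ell-1)n+j}\mapsto u_\ell v_j$ to realize $S$ as the Segre product $\kappa[s_iu_\ell v_j]$ of three polynomial rings, so that $\Proj S\cong \mathbb{P}^{m-1}\times\mathbb{P}^{r-1}\times\mathbb{P}^{n-1}$ outright. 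Your identification of $\ker(R\to \kappa[s_iu_\ell v_j])$ with $I$ correctly leans on the two facts established in the paper's proof of Theorem \ref{thm: prime} (namely $\ker(R\to A)=I_2(H)$ and $I_2(\widetilde V)=I_2(T)\cap A$), so the argument is complete and not circular. What each approach buys: the paper's chart-by-chart isomorphisms are elementary and self-contained but give no structural information beyond regularity of each chart; your identification is sharper in that it pins down $\mathcal{X}$ exactly as a product of projective spaces, from which smoothness, irreducibility (Theorem \ref{thm: prime}), the dimension formula $m+n+r-3$ (Corollary \ref{cor: dim(X)}), and even the Hilbert polynomial and multiplicity all fall out at once. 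Your cautionary remark about what ``smooth as a projective variety'' means here — regularity of $S_{\mathfrak p}$ for every homogeneous prime $\mathfrak p$ other than the irrelevant ideal, which is the smoothness of the punctured cone over $\Proj S$ — is exactly the standard identification the paper also invokes, so that concern is discharged correctly.
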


\begin{proof}
Using Notation \ref{notation}, we must show that $S$ is regular at the localization at every homogeneous prime except possibly the irrelevant ideal, i.e. the homogeneous maximal ideal $\mu$.

Because no prime ideal $P \neq \mu$ of $S$ contains every indeterminate $x^\ell_{ij}$, every localization $S_P$ of $S$ at any homogeneous prime ideal $P \neq m$ may be obtained as the localization of some $S[1/x^\ell_{ij}]$ at the image of $P$ in $S[1/x^\ell_{ij}]$.  Now because the localization of a regular ring is again regular, it suffices to show that every ring of the form $S[1/x^\ell_{ij}]$ is regular.  

Fix one indeterminate $x^{\ell'}_{i'j'}$.  Let $Y_1$ denote the subset of the $x^\ell_{ij}$ algebra generators of $R$ satisfying at least two of the following conditions: (1) $i=i'$, (2) $j=j'$, (3) $\ell = \ell'$.  Informally, we are choosing among the algebra generators of $R$ the variables originating from any matrix $X^\ell$ that share both a row and column index with our chosen generator $x^{\ell'}_{i'j'}$ together with those algebra generators originating from the same matrix $X^{\ell'}$ as $x^{\ell'}_{i'j'}$ that share either a row or column index with $x^{\ell'}_{i'j'}$.  These are exactly the algebra generators $x^\ell_{ij}$ of $R$ so that $x^\ell_{ij} \cdot x^{\ell'}_{i'j'}$ does not appear as a term of any of the natural generators of $I$.  For each $x^\ell_{ij} \in Y_1$, designate a new variable $z^\ell_{ij}$ and for the ring $T=\kappa[z^\ell_{ij} \mid x^\ell_{ij} \in Y_1]$.  We claim that $S[1/x^{\ell'}_{i'j'}] \cong T[1/z^{\ell'}_{i'j'}]$.  

To show the promised isomorphism, we will give explicit homomorphisms $\phi: S[1/x^{\ell'}_{i'j'}] \rightarrow T$ and $\psi: T \rightarrow S[1/x^{\ell'}_{i'j'}]$ so that $\phi \circ \psi  = \textbf{1}_{T[1/z^{\ell'}_{i'j'}]}$ and $\psi \circ \phi = \textbf{1}_{S[1/x^{\ell'}_{i'j'}]}$.  In order to do this, we consider three subsets of the free algebra generators of $R$, the first of which is $Y_1$.  The next subset is $Y_2$, which will denote the subset of the $x^\ell_{ij}$ such that exactly one of the following conditions is met: (1) $i=i'$, (2) $j=j'$, (3) $\ell = \ell'$.  Finally, take $Y_3$ to be the subset of the the subset of the $x^\ell_{ij}$ such that $i \neq i'$, $j \neq j'$, and $\ell \neq \ell'$.  By construction, the sets $Y_1$, $Y_2$, and $Y_3$ are pairwise disjoint.  

We explain an overview of the reason for this subdivision before we continue with the formal argument.  Notice that the elements $x^\ell_{ij}$ of $Y_2$ are those such that $x^\ell_{ij} \cdot x^{\ell'}_{i'j'}$ is a term of exactly one of the natural generators of $I$ and that the variables other than $x^\ell_{ij}$ involved in that generator are all elements of $Y_1$.  We will use that generator to solve for elements of $Y_2$ in terms of the elements of $Y_1$.  We may then use a generator of $I$ involving each element of $Y_3$ to solve for that element in terms of elements of $Y_2$ and $Y_1$, which we can then simplify to an expression in terms of the elements of $Y_1$.  

We define $\phi:R[1/x^{\ell'}_{i'j'}] \rightarrow T[1/z^{\ell'}_{i'j'}]$ by specifying the image of the free algebra generators $x^{\ell}_{ij}$ of $R$ over $\kappa$, extending linearly, and showing that $\varphi(IR[1/x^{\ell'}_{i'j'}]) = 0$, in which case we may think of $\phi$ as a map from $S[1/x^{\ell'}_{i'j'}]$ to $T[1/z^{\ell'}_{i'j'}]$.  It will be immediate that $\varphi(\kappa[x^{\ell'}_{i'j'}, 1/x^{\ell'}_{i'j'}]) = \kappa[z^{\ell'}_{i'j'}, 1/z^{\ell'}_{i'j'}]$.  For any $s \in S$ and $t \in T$, we may write simply $s$ or $t$ for $s/1 \in S[1/x^{\ell'}_{i'j'}]$ or $t/1 \in T[1/z^{\ell'}_{i'j'}]$.  We will group cases by notational convenience.  Let \[
\phi(x_{ij}^\ell) =  \begin{cases} 
      z^\ell_{ij} & x^\ell_{ij} \in Y_1 \\
      \dfrac{z^{\ell'}_{ij'}z^{\ell'}_{i'j}}{z^{\ell'}_{i'j'}} & x^\ell_{ij} \in Y_2 \mbox{ and } \ell = \ell' \\
      \dfrac{z^{\ell'}_{i'j}z^\ell_{i'j'}}{z^{\ell'}_{i'j'}} & x^\ell_{ij} \in Y_2 \mbox{ and }i = i' \\
      \dfrac{z^{\ell'}_{ij'}z^\ell_{i'j'}}{z^{\ell'}_{i'j'}} & x^\ell_{ij} \in Y_2 \mbox{ and }j = j' \\
      \dfrac{z^{\ell'}_{i'j}z^{\ell'}_{ij'}z^{\ell}_{i'j'}}{(z^{\ell'}_{i'j'})^2} & x^\ell_{ij} \in Y_3.
   \end{cases}
\]  

We first claim that $\phi(IR[1/x^{\ell'}_{i'j'}]) = 0$.  It is enough to show that $\phi(\delta) = 0$ for every generator $\delta$ of $I$.  If $\delta$ involves $x^{\ell'}_{i'j'}$, then there are two cases: one of the summands of $\delta$ is a term $x^{\ell'}_{i'j'} \cdot x^{\ell}_{ij}$ with either $x^{\ell}_{ij} \in Y_2$ or $x^\ell_{ij} \in Y_3$.  If $x^\ell_{ij} \in Y_2$ and $\ell = \ell'$ (respectively, $i=i'$ or $j=j'$), then $\delta$ has the form 
$x^{\ell'}_{i'j'}x^{\ell'}_{ij}-x^{\ell'}_{ij'}x^{\ell'}_{i'j}$ (respectively, $x^{\ell'}_{i'j'}x^\ell_{i'j}-x^{\ell'}_{i'j}x^{\ell}_{i'j'}$ or $x^{\ell'}_{i'j'}x^\ell_{ij'}-x^{\ell'}_{ij'}x^{\ell}_{i'j'}$), and $x^{\ell'}_{ij'}, x^{\ell'}_{i'j} \in Y_1$ (respectively, $x^{\ell'}_{i'j}x^{\ell}_{i'j'} \in Y_1$ or $x^{\ell'}_{ij'}x^{\ell}_{i'j'} \in Y_1$).  Hence, $\phi(\delta) = z^{\ell'}_{i'j'}\frac{z^{\ell'}_{ij'}z^{\ell'}_{i'j}}{z^{\ell'}_{i'j'}}-z^{\ell'}_{ij'}z^{\ell'}_{i'j}$ (respectively, $z^{\ell'}_{i'j'}\frac{z^{\ell'}_{i'j}z^\ell_{i'j'}}{z^{\ell'}_{i'j'}}-z^{\ell'}_{i'j}z^\ell_{i'j'}$ or $z^{\ell'}_{i'j'}\frac{z^{\ell'}_{ij'}z^\ell_{i'j'}}{z^{\ell'}_{i'j'}}-z^{\ell'}_{ij'}z^\ell_{i'j'}$), which is plainly $0$.    Similarly, if $x^\ell_{ij} \in Y_3$ and $\delta$ is a natural generator of $I_2(H)$ (respectively, of $I_2(V)$), then $\delta$ has the form $x^{\ell'}_{i'j'} x^\ell_{ij}-x^{\ell}_{i'j}x^{\ell'}_{ij'}$ with $x^\ell_{i'j} \in Y_2$ and $x^{\ell'}_{ij'} \in Y_1$ (respectively, $x^{\ell'}_{i'j'} x^\ell_{ij}-x^{\ell}_{ij'}x^{\ell'}_{i'j}$ with $x^{\ell}_{ij'} \in Y_2$ and $x^{\ell'}_{i'j} \in Y_1$).   Then $\varphi(\delta) =  z^{\ell'}_{i'j'} \frac{z^{\ell'}_{i'j}z^{\ell'}_{ij'}z^{\ell}_{i'j'}}{(z^{\ell'}_{i'j'})^2}-\frac{z^{\ell'}_{i'j}z^\ell_{i'j'}}{z^{\ell'}_{i'j'}} z^{\ell'}_{ij'}$ (respectively, $z^{\ell'}_{i'j'} \frac{z^{\ell'}_{i'j}z^{\ell'}_{ij'}z^{\ell}_{i'j'}}{(z^{\ell'}_{i'j'})^2}-\frac{z^{\ell'}_{ij'}z^\ell_{i'j'}}{z^{\ell'}_{i'j'}}z^{\ell'}_{i'j}$), which is plainly $0$.  Hence, $\phi$ induces a map $S[1/x^{\ell'}_{i'j'}] \rightarrow T[1/z^{\ell'}_{i'j'}]$, which we will also refer to as $\varphi$.

We define the map $\psi:T[1/z^{\ell'}_{i'j'}] \rightarrow S[1/x^\ell_{ij}]$ by specifying $\psi(z^\ell_{ij}) = x^{\ell}_{ij}$ for every $i$, $j$, and $\ell$ and extending linearly.  It is clear that $\varphi \circ \psi = \mathbb{1}_{T[1/z^{\ell'}_{i'j'}]}$ and that $(\psi \circ \varphi) (x^\ell_{ij}) = x^\ell_{ij}$ for all $x^\ell_{ij} \in Y_1$.  If $x^\ell_{ij} \in Y_2$ and $\ell = \ell'$, then \[
(\psi \circ \varphi )(x^\ell_{ij}) =(\psi \circ \varphi )(x^{\ell'}_{ij}) = \psi\left(\frac{x^{\ell'}_{ij'}x^{\ell'}_{i'j}}{x^{\ell'}_{i'j'}}\right) = \frac{x^{\ell'}_{ij'}x^{\ell'}_{i'j}}{x^{\ell'}_{i'j'}}= x^{\ell'}_{ij} \in S[1/x^{\ell'}_{i'j'}]
\] because $x^{\ell'}_{ij'}x^{\ell'}_{i'j}-x^{\ell'}_{i'j'}x^{\ell'}_{ij} \in I$.  A similar argument holds in the case of $i=i'$ (respectively, $j=j'$) recalling that the definition of $Y_2$ forces $x^{\ell'}_{i'j}x^{\ell}_{i'j'}-x^{\ell'}_{i'j'}x^{\ell}_{i'j} \in I_2(V)$  (respectively, $x^{\ell'}_{ij'}x^{\ell}_{i'j'}-x^{\ell'}_{i'j'}x^{\ell}_{ij'} \in I_2(H)$) whenever $x^\ell_{i'j} \in Y_2$  (respectively, $x^\ell_{ij'} \in Y_2$).  Finally, if $x^\ell_{ij} \in Y_3$, then $(\psi \circ \varphi) (x^\ell_{ij}) = \psi\left(\frac{x^{\ell'}_{i'j}x^{\ell'}_{ij'}x^{\ell}_{i'j'}}{(x^{\ell'}_{i'j'})^2}\right) =\frac{x^{\ell'}_{i'j}x^{\ell'}_{ij'}x^{\ell}_{i'j'}}{(x^{\ell'}_{i'j'})^2} = x^\ell_{ij} \in S[1/x^\ell_{ij}]$ because \[
x^\ell_{ij}(x^{\ell'}_{i'j'})^2-x^{\ell'}_{i'j}x^{\ell'}_{ij'}x^{\ell}_{i'j'} = x^{\ell'}_{i'j'}(x^{\ell'}_{i'j'}x^\ell_{ij}-x^\ell_{i'j}x^{\ell'}_{ij'})+x^{\ell'}_{ij'}(x^{\ell'}_{i'j'}x^{\ell}_{i'j}-x^\ell_{i'j'}x^{\ell'}_{i'j}),
\] where $x^{\ell'}_{i'j'}x^\ell_{ij}-x^\ell_{i'j}x^{\ell'}_{ij'}$ and $x^{\ell'}_{i'j'}x^{\ell}_{i'j}-x^\ell_{i'j'}x^{\ell'}_{i'j} $ can be seen to be natural generators of $I_2(H)$ and $I_2(V)$, respectively, using the fact that $x^\ell_{ij} \in Y_3$ requires $\ell \neq \ell'$, $i \neq i'$, and $j \neq j'$.

Hence, $\varphi$ and $\psi$ are inverses, $S[1/x^{\ell'}_{i'j'}] \cong T[1/z^{\ell'}_{i'j'}]$, $S[1/x^{\ell'}_{i'j'}]$ is regular because $T[1/z^{\ell'}_{i'j'}]$ is, and $S$ is regular at the localization at every homogeneous prime except possibly the irrelevant ideal, which is to say that $\mathcal{X}$ is a smooth projective variety.  

\end{proof}

\begin{example}
With notation as in the theorem above, if $r = 2$, $m = n = 3$, and $x^{\ell'}_{i'j'} = x^1_{32}$ then the partition of the variables $x^{\ell}_{ij}$ into the sets $Y_1$, $Y_2$, and $Y_3$ may be seen below with the elements of $Y_1$ placed in orange boxes, the elements of $Y_2$ placed in black boxes, and the elements of $Y_3$ placed in green boxes.  For notational convenience, we will use $x_{ij}$ to denote the $x^1_{ij}$ and $y_{ij}$ to denote the $x^2_{ij}$.  This small example gives an adequate sense of the relations catalogued more precisely in the proof above. 

\begin{minipage}{0.5\textwidth}
  \begin{equation*}
    H = \begin{tikzpicture}[baseline=(current  bounding  box.center)]
      \matrix [matrix of math nodes,left delimiter=(,right delimiter=)] (m)
      {
        x_{11} & x_{12} & x_{13} & y_{11} & y_{12} & y_{13} \\
        x_{21} & x_{22} & x_{23} & y_{21} & y_{22} & y_{23} \\
        x_{31} & x_{32} & x_{33} & y_{31} & y_{32} & y_{33}\\
      };
      \draw[black, thick]
      ([xshift = 1pt, yshift = -1pt] m-1-1.north west) --
      ([xshift = 1pt, yshift = 1pt]m-1-1.south west) --
      ([xshift = -1pt, yshift = 1pt]m-1-1.south east) --
      ([xshift = -1pt, yshift = -1pt]m-1-1.north east) --
      ([xshift = 1pt, yshift = -1pt]m-1-1.north west);
             \draw[black, thick]
      ([xshift = 1pt, yshift = -1pt]m-2-1.north west) --
      ([xshift = 1pt, yshift = 1pt]m-2-1.south west) --
      ([xshift = -1pt, yshift = 1pt]m-2-1.south east) --
      ([xshift = -1pt, yshift = -1pt]m-2-1.north east) --
      ([xshift = 1pt, yshift = -1pt]m-2-1.north west);
            \draw[black, thick]
      ([xshift = 1pt, yshift = -1pt]m-1-3.north west) --
      ([xshift = 1pt, yshift = 1pt]m-1-3.south west) --
      ([xshift = -1pt, yshift = 1pt]m-1-3.south east) --
      ([xshift = -1pt, yshift = -1pt]m-1-3.north east) --
      ([xshift = 1pt, yshift = -1pt]m-1-3.north west);
              \draw[black, thick]
      ([xshift = 1pt, yshift = -1pt]m-2-3.north west) --
      ([xshift = 1pt, yshift = 1pt]m-2-3.south west) --
      ([xshift = -1pt, yshift = 1pt]m-2-3.south east) --
      ([xshift = -1pt, yshift = -1pt]m-2-3.north east) --
      ([xshift = 1pt, yshift = -1pt]m-2-3.north west);
                \draw[black, thick]
      ([xshift = 1pt, yshift = -1pt]m-3-4.north west) --
      ([xshift = 1pt, yshift = 1pt]m-3-4.south west) --
      ([xshift = -1pt, yshift = 1pt]m-3-4.south east) --
      ([xshift = -1pt, yshift = -1pt]m-3-4.north east) --
      ([xshift = 1pt, yshift = -1pt]m-3-4.north west);
               \draw[black, thick]
      ([xshift = 1pt, yshift = -1pt]m-3-6.north west) --
      ([xshift = 1pt, yshift = 1pt]m-3-6.south west) --
      ([xshift = -1pt, yshift = 1pt]m-3-6.south east) --
      ([xshift = -1pt, yshift = -1pt]m-3-6.north east) --
      ([xshift = 1pt, yshift = -1pt]m-3-6.north west);
                \draw[black, thick]
      ([xshift = 1pt, yshift = -1pt]m-2-5.north west) --
      ([xshift = 1pt, yshift = 1pt]m-2-5.south west) --
      ([xshift = -1pt, yshift = 1pt]m-2-5.south east) --
      ([xshift = -1pt, yshift = -1pt]m-2-5.north east) --
      ([xshift = 1pt, yshift = -1pt]m-2-5.north west);
                   \draw[black, thick]
      ([xshift = 1pt, yshift = -1pt]m-1-5.north west) --
      ([xshift = 1pt, yshift = 1pt]m-1-5.south west) --
      ([xshift = -1pt, yshift = 1pt]m-1-5.south east) --
      ([xshift = -1pt, yshift = -1pt]m-1-5.north east) --
      ([xshift = 1pt, yshift = -1pt]m-1-5.north west);
      \draw[orange, thick]
        ([xshift = 1pt, yshift = -1pt]m-3-1.north west) --
      ([xshift = 1pt, yshift = 1pt]m-3-1.south west) --
      ([xshift = -1pt, yshift = 1pt]m-3-1.south east) --
      ([xshift = -1pt, yshift = -1pt]m-3-1.north east) --
      ([xshift = 1pt, yshift = -1pt]m-3-1.north west);
         \draw[orange, thick]
      ([xshift = 1pt, yshift = -1pt]m-3-2.north west) --
      ([xshift = 1pt, yshift = 1pt]m-3-2.south west) --
      ([xshift = -1pt, yshift = 1pt]m-3-2.south east) --
      ([xshift = -1pt, yshift = -1pt]m-3-2.north east) --
      ([xshift = 1pt, yshift = -1pt]m-3-2.north west);
           \draw[orange, thick]
      ([xshift = 1pt, yshift = -1pt]m-3-3.north west) --
      ([xshift = 1pt, yshift = 1pt]m-3-3.south west) --
      ([xshift = -1pt, yshift = 1pt]m-3-3.south east) --
      ([xshift = -1pt, yshift = -1pt]m-3-3.north east) --
      ([xshift = 1pt, yshift = -1pt]m-3-3.north west);
            \draw[orange, thick]
      ([xshift = 1pt, yshift = -1pt]m-2-2.north west) --
      ([xshift = 1pt, yshift = 1pt]m-2-2.south west) --
      ([xshift = -1pt, yshift = 1pt]m-2-2.south east) --
      ([xshift = -1pt, yshift = -1pt]m-2-2.north east) --
      ([xshift = 1pt, yshift = -1pt]m-2-2.north west);
          \draw[orange, thick]
      ([xshift = 1pt, yshift = -1pt] m-1-2.north west) --
      ([xshift = 1pt, yshift = 1pt]m-1-2.south west) --
      ([xshift = -1pt, yshift = 1pt]m-1-2.south east) --
      ([xshift = -1pt, yshift = -1pt]m-1-2.north east) --
      ([xshift = 1pt, yshift = -1pt]m-1-2.north west);
             \draw[orange, thick]
      ([xshift = 1pt, yshift = -1pt]m-3-5.north west) --
      ([xshift = 1pt, yshift = 1pt]m-3-5.south west) --
      ([xshift = -1pt, yshift = 1pt]m-3-5.south east) --
      ([xshift = -1pt, yshift = -1pt]m-3-5.north east) --
      ([xshift = 1pt, yshift = -1pt]m-3-5.north west);
                 \draw[green,thick]
      ([xshift = 1pt, yshift = -1pt]m-1-4.north west) --
      ([xshift = 1pt, yshift = 1pt]m-1-4.south west) --
      ([xshift = -1pt, yshift = 1pt]m-1-4.south east) --
      ([xshift = -1pt, yshift = -1pt]m-1-4.north east) --
      ([xshift = 1pt, yshift = -1pt]m-1-4.north west);
            \draw[green,thick]
      ([xshift = 1pt, yshift = -1pt]m-2-4.north west) --
      ([xshift = 1pt, yshift = 1pt]m-2-4.south west) --
      ([xshift = -1pt, yshift = 1pt]m-2-4.south east) --
      ([xshift = -1pt, yshift = -1pt]m-2-4.north east) --
      ([xshift = 1pt, yshift = -1pt]m-2-4.north west);
                       \draw[green,thick]
      ([xshift = 1pt, yshift = -1pt]m-1-6.north west) --
      ([xshift = 1pt, yshift = 1pt]m-1-6.south west) --
      ([xshift = -1pt, yshift = 1pt]m-1-6.south east) --
      ([xshift = -1pt, yshift = -1pt]m-1-6.north east) --
      ([xshift = 1pt, yshift = -1pt]m-1-6.north west);
            \draw[green,thick]
      ([xshift = 1pt, yshift = -1pt]m-2-6.north west) --
      ([xshift = 1pt, yshift = 1pt]m-2-6.south west) --
      ([xshift = -1pt, yshift = 1pt]m-2-6.south east) --
      ([xshift = -1pt, yshift = -1pt]m-2-6.north east) --
      ([xshift = 1pt, yshift = -1pt]m-2-6.north west);
    \end{tikzpicture}
  \end{equation*}
  \end{minipage}
  \begin{minipage}{0.5\textwidth}
  \begin{equation*}
  V = \begin{tikzpicture}[baseline=(current  bounding  box.center)]
      \matrix [matrix of math nodes,left delimiter=(,right delimiter=)] (m)
      {
        x_{11} & x_{12} & x_{13}  \\
        x_{21} & x_{22} & x_{23} \\
        x_{31} & x_{32} & x_{33} \\
        y_{11} & y_{12} & y_{13}\\
        y_{21} & y_{22} & y_{23} \\
        y_{31} & y_{32} & y_{33}\\
      };
         \draw[black, thick]
      ([xshift = 1pt, yshift = -1pt] m-1-1.north west) --
      ([xshift = 1pt, yshift = 1pt]m-1-1.south west) --
      ([xshift = -1pt, yshift = 1pt]m-1-1.south east) --
      ([xshift = -1pt, yshift = -1pt]m-1-1.north east) --
      ([xshift = 1pt, yshift = -1pt]m-1-1.north west);
             \draw[black, thick]
      ([xshift = 1pt, yshift = -1pt]m-2-1.north west) --
      ([xshift = 1pt, yshift = 1pt]m-2-1.south west) --
      ([xshift = -1pt, yshift = 1pt]m-2-1.south east) --
      ([xshift = -1pt, yshift = -1pt]m-2-1.north east) --
      ([xshift = 1pt, yshift = -1pt]m-2-1.north west);
            \draw[black, thick]
      ([xshift = 1pt, yshift = -1pt]m-1-3.north west) --
      ([xshift = 1pt, yshift = 1pt]m-1-3.south west) --
      ([xshift = -1pt, yshift = 1pt]m-1-3.south east) --
      ([xshift = -1pt, yshift = -1pt]m-1-3.north east) --
      ([xshift = 1pt, yshift = -1pt]m-1-3.north west);
              \draw[black, thick]
      ([xshift = 1pt, yshift = -1pt]m-2-3.north west) --
      ([xshift = 1pt, yshift = 1pt]m-2-3.south west) --
      ([xshift = -1pt, yshift = 1pt]m-2-3.south east) --
      ([xshift = -1pt, yshift = -1pt]m-2-3.north east) --
      ([xshift = 1pt, yshift = -1pt]m-2-3.north west);
                \draw[black, thick]
      ([xshift = 1pt, yshift = -1pt]m-6-1.north west) --
      ([xshift = 1pt, yshift = 1pt]m-6-1.south west) --
      ([xshift = -1pt, yshift = 1pt]m-6-1.south east) --
      ([xshift = -1pt, yshift = -1pt]m-6-1.north east) --
      ([xshift = 1pt, yshift = -1pt]m-6-1.north west);
               \draw[black, thick]
      ([xshift = 1pt, yshift = -1pt]m-6-3.north west) --
      ([xshift = 1pt, yshift = 1pt]m-6-3.south west) --
      ([xshift = -1pt, yshift = 1pt]m-6-3.south east) --
      ([xshift = -1pt, yshift = -1pt]m-6-3.north east) --
      ([xshift = 1pt, yshift = -1pt]m-6-3.north west);
                \draw[black, thick]
      ([xshift = 1pt, yshift = -1pt]m-5-2.north west) --
      ([xshift = 1pt, yshift = 1pt]m-5-2.south west) --
      ([xshift = -1pt, yshift = 1pt]m-5-2.south east) --
      ([xshift = -1pt, yshift = -1pt]m-5-2.north east) --
      ([xshift = 1pt, yshift = -1pt]m-5-2.north west);
                   \draw[black, thick]
      ([xshift = 1pt, yshift = -1pt]m-4-2.north west) --
      ([xshift = 1pt, yshift = 1pt]m-4-2.south west) --
      ([xshift = -1pt, yshift = 1pt]m-4-2.south east) --
      ([xshift = -1pt, yshift = -1pt]m-4-2.north east) --
      ([xshift = 1pt, yshift = -1pt]m-4-2.north west);
      \draw[orange, thick]
        ([xshift = 1pt, yshift = -1pt]m-3-1.north west) --
      ([xshift = 1pt, yshift = 1pt]m-3-1.south west) --
      ([xshift = -1pt, yshift = 1pt]m-3-1.south east) --
      ([xshift = -1pt, yshift = -1pt]m-3-1.north east) --
      ([xshift = 1pt, yshift = -1pt]m-3-1.north west);
         \draw[orange, thick]
      ([xshift = 1pt, yshift = -1pt]m-3-2.north west) --
      ([xshift = 1pt, yshift = 1pt]m-3-2.south west) --
      ([xshift = -1pt, yshift = 1pt]m-3-2.south east) --
      ([xshift = -1pt, yshift = -1pt]m-3-2.north east) --
      ([xshift = 1pt, yshift = -1pt]m-3-2.north west);
           \draw[orange, thick]
      ([xshift = 1pt, yshift = -1pt]m-3-3.north west) --
      ([xshift = 1pt, yshift = 1pt]m-3-3.south west) --
      ([xshift = -1pt, yshift = 1pt]m-3-3.south east) --
      ([xshift = -1pt, yshift = -1pt]m-3-3.north east) --
      ([xshift = 1pt, yshift = -1pt]m-3-3.north west);
            \draw[orange, thick]
      ([xshift = 1pt, yshift = -1pt]m-2-2.north west) --
      ([xshift = 1pt, yshift = 1pt]m-2-2.south west) --
      ([xshift = -1pt, yshift = 1pt]m-2-2.south east) --
      ([xshift = -1pt, yshift = -1pt]m-2-2.north east) --
      ([xshift = 1pt, yshift = -1pt]m-2-2.north west);
          \draw[orange, thick]
      ([xshift = 1pt, yshift = -1pt] m-1-2.north west) --
      ([xshift = 1pt, yshift = 1pt]m-1-2.south west) --
      ([xshift = -1pt, yshift = 1pt]m-1-2.south east) --
      ([xshift = -1pt, yshift = -1pt]m-1-2.north east) --
      ([xshift = 1pt, yshift = -1pt]m-1-2.north west);
             \draw[orange, thick]
      ([xshift = 1pt, yshift = -1pt]m-6-2.north west) --
      ([xshift = 1pt, yshift = 1pt]m-6-2.south west) --
      ([xshift = -1pt, yshift = 1pt]m-6-2.south east) --
      ([xshift = -1pt, yshift = -1pt]m-6-2.north east) --
      ([xshift = 1pt, yshift = -1pt]m-6-2.north west);
                 \draw[green,thick]
      ([xshift = 1pt, yshift = -1pt]m-4-1.north west) --
      ([xshift = 1pt, yshift = 1pt]m-4-1.south west) --
      ([xshift = -1pt, yshift = 1pt]m-4-1.south east) --
      ([xshift = -1pt, yshift = -1pt]m-4-1.north east) --
      ([xshift = 1pt, yshift = -1pt]m-4-1.north west);
            \draw[green,thick]
      ([xshift = 1pt, yshift = -1pt]m-5-1.north west) --
      ([xshift = 1pt, yshift = 1pt]m-5-1.south west) --
      ([xshift = -1pt, yshift = 1pt]m-5-1.south east) --
      ([xshift = -1pt, yshift = -1pt]m-5-1.north east) --
      ([xshift = 1pt, yshift = -1pt]m-5-1.north west);
                       \draw[green,thick]
      ([xshift = 1pt, yshift = -1pt]m-4-3.north west) --
      ([xshift = 1pt, yshift = 1pt]m-4-3.south west) --
      ([xshift = -1pt, yshift = 1pt]m-4-3.south east) --
      ([xshift = -1pt, yshift = -1pt]m-4-3.north east) --
      ([xshift = 1pt, yshift = -1pt]m-4-3.north west);
            \draw[green,thick]
      ([xshift = 1pt, yshift = -1pt]m-5-3.north west) --
      ([xshift = 1pt, yshift = 1pt]m-5-3.south west) --
      ([xshift = -1pt, yshift = 1pt]m-5-3.south east) --
      ([xshift = -1pt, yshift = -1pt]m-5-3.north east) --
      ([xshift = 1pt, yshift = -1pt]m-5-3.north west);
          \end{tikzpicture}
  \end{equation*}
  \end{minipage}

\end{example}

\begin{corollary}\label{cor: dim(X)}
If $\mathcal{X}$ is a toric double determinantal variety on $r$ matrices of size $m \times n$, then the dimension of the projective variety $\mathcal{X}$ is $m+n+r-3$.
\end{corollary}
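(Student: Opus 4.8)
The plan is to read the dimension directly off the explicit description of the rings $S[1/x^{\ell'}_{i'j'}]$ produced in the proof of Theorem \ref{thm: smooth}. Fix any indeterminate $x^{\ell'}_{i'j'}$. That proof establishes an isomorphism $S[1/x^{\ell'}_{i'j'}] \cong T[1/z^{\ell'}_{i'j'}]$, where $T = \kappa[z^\ell_{ij} \mid x^\ell_{ij} \in Y_1]$ is an honest polynomial ring over $\kappa$ in $|Y_1|$ variables. So the first and essentially only step is to count $|Y_1|$, the set of triples $(i,j,\ell)$ with $1 \le i \le m$, $1 \le j \le n$, $1 \le \ell \le r$ satisfying at least two of the conditions $i=i'$, $j=j'$, $\ell=\ell'$. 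Partitioning $Y_1$ according to which set of conditions holds, one counts the single triple meeting all three conditions, the $r-1$ triples agreeing with $x^{\ell'}_{i'j'}$ in row and column only, the $n-1$ agreeing in row and matrix only, and the $m-1$ agreeing in column and matrix only; inclusion--exclusion then gives $|Y_1| = 1 + (r-1) + (n-1) + (m-1) = m+n+r-2$. (One can cross-check this against the displayed example, where $r=2$, $m=n=3$, and exactly $6 = 3+3+2-2$ boxes are orange.)

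Next I would convert this count into the Krull dimension of $S$. Since $T$ is a polynomial ring in $m+n+r-2$ variables over $\kappa$, we have $\dim T = m+n+r-2$; and because $T$ is a finitely generated $\kappa$-domain and $z^{\ell'}_{i'j'} \neq 0$, inverting $z^{\ell'}_{i'j'}$ leaves the fraction field—and hence, by the dimension formula for affine domains, the dimension—unchanged, so $\dim T[1/z^{\ell'}_{i'j'}] = m+n+r-2$ as well. By Theorem \ref{thm: prime}, $S$ is a domain, and $x^{\ell'}_{i'j'} \notin I$ because $I$ is homogeneous and generated in degree $2$ and so contains no linear form; the same reasoning therefore yields $\dim S = \dim S[1/x^{\ell'}_{i'j'}] = \dim T[1/z^{\ell'}_{i'j'}] = m+n+r-2$.

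Finally, since $\mathcal{X} = \Proj S$ with $S$ a standard graded $\kappa$-domain that is not $\kappa$ itself, the projective dimension is one less than the Krull dimension of the coordinate ring, so $\dim \mathcal{X} = \dim S - 1 = m+n+r-3$, as claimed. The only place calling for any care is the bookkeeping in the count of $Y_1$ and the observation that none of the localizations collapse the dimension; both are routine once one recalls that for an affine domain the dimension equals the transcendence degree of the fraction field, so inverting a nonzero element is harmless. I do not anticipate a genuine obstacle here.
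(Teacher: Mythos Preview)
Your proof is correct and follows essentially the same approach as the paper: both arguments invoke the isomorphism $S[1/x^{\ell'}_{i'j'}] \cong T[1/z^{\ell'}_{i'j'}]$ from Theorem~\ref{thm: smooth}, count $|Y_1| = m+n+r-2$ by the same partition, and deduce $\dim \mathcal{X} = |Y_1|-1$. The only cosmetic difference is that the paper passes through the localization $S_\mu$ at the irrelevant ideal, whereas you invoke Theorem~\ref{thm: prime} and the transcendence-degree characterization of dimension for affine domains to see that inverting a nonzero element preserves $\dim S$; both routes are standard and equivalent here.
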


\begin{proof}
With notation as in Theorem \ref{thm: smooth}, let $\mu$ denote the homogeneous maximal ideal of $S$ and $\nu$ the ideal of $T$ generated by the $z^{\ell}_{ij}$ for which $x^\ell_{ij} \in Y_1 \setminus \{x^{\ell'}_{i'j'}\}$.  Then \[
\dim(\mathcal{X}) = \dim(S_\mu)-1 = \dim(S_\mu[1/x^{\ell'}_{i'j'}]) = \dim(T_\nu) = |Y_1|-1 = (m-1)+(n-1)+(r-1),
\] where the $m-1$ counts the $x^{\ell}_{ij} \neq x^{\ell'}_{i'j'}$ with $\ell = \ell'$ and $j=j'$, the $n-1$ counts the $x^{\ell}_{ij} \neq x^{\ell'}_{i'j'}$ with $\ell = \ell'$ and $i=i'$, and the $r-1$ counts the $x^{\ell}_{ij} \neq x^{\ell'}_{i'j'}$ with $i = i'$ and $j=j'$.
\end{proof}

Although this dimension count appears in \cite[Corollary 4.5]{FK}, we note that the computation and proof are much simpler when considering the toric case alone.

To conclude this section, we will show that the toric case is somewhat broader than it might first appear.  In the $r=2$ case, all double determinantal varieties in which the minors taken in either $H$ or in $V$ are of size 2 are either ordinary determinantal varieties or toric.  The precise statement follows:

\begin{theorem}
Let $I = I_a(H)+I_b(V)$ be a double determinantal ideal with $H$ and $V$ determined by $2$ matrices of size $m \times n$ with $m, n \geq 2$.  If $s = 2$ or $t=2$, and $s+t > 4$, then $I$ is the defining ideal of a determinantal variety.  
\end{theorem}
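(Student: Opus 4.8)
The plan is to show that under the stated hypotheses one of the two ideals $I_a(H)$, $I_b(V)$ is contained in the other, so that $I$ collapses to the ideal of $2\times 2$ minors of a single generic matrix. Exactly one of $a,b$ equals $2$ and the other is at least $3$ (the case $a=b=2$ being excluded by $a+b>4$), and transposing all of the matrices $X_\ell$ induces an isomorphism carrying the double determinantal ideal with minor sizes $(a,b)$ on matrices of size $m\times n$ to the one with minor sizes $(b,a)$ on matrices of size $n\times m$ (since $H(X_\ell)^T$ is the vertical concatenation of the $X_\ell^T$ and $V(X_\ell)^T$ is their horizontal concatenation); hence it suffices to treat the case $a=2$, $b\ge 3$, and I would prove that $I=I_2(H)$ in that case.

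Since $I_2(H)\subseteq I$ automatically, the content is the reverse inclusion $I_b(V)\subseteq I_2(H)$, and for this it is enough to show that each natural generator of $I_b(V)$ lies in $I_2(H)$. Such a generator is $\det N$ for a $b\times b$ submatrix $N$ of $V$. The rows of $V$ split into the two groups coming from $X_1$ and from $X_2$, so since $b\ge 3$, the pigeonhole principle forces two rows of $N$ to come from the same $X_\ell$, say the restrictions of distinct rows $i,i'$ of $X_\ell$ to the $b$ columns selected by $N$. I would then apply the Laplace (generalized cofactor) expansion of $\det N$ along these two rows: it writes $\det N$ as a $\kappa$-linear combination of products of a $2\times 2$ minor of $X_\ell$ on rows $i,i'$ with a complementary $(b-2)\times(b-2)$ minor of $N$. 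Because $X_\ell$ is a block of columns of $H$, every such $2\times 2$ minor of $X_\ell$ is a $2$-minor of $H$ and hence lies in $I_2(H)$; therefore $\det N\in I_2(H)$.

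Having shown $I=I_2(H)+I_b(V)=I_2(H)$, and noting that the entries of $H$ are distinct indeterminates, I would conclude that $I$ is the ideal of $2\times 2$ minors of a generic $m\times rn$ matrix, so $\mathcal{X}$ is the determinantal variety of $m\times rn$ matrices of rank at most one. Applying the transposition isomorphism then disposes of the symmetric case $b=2$, $a\ge 3$, where instead $I=I_2(V)$ and $\mathcal{X}$ is the determinantal variety of $rm\times n$ matrices of rank at most one.

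I do not anticipate a genuine obstacle: the heart of the argument is a one-line pigeonhole remark, packaged via a cofactor expansion. The only steps that require care are verifying that transposition really does interchange $I_a(H)$ with $I_b(V)$ (so that the second case reduces to the first) and the index bookkeeping in the Laplace expansion, in particular confirming that each $2\times 2$ sub-block appearing there is an honest $2\times 2$ minor of a single $X_\ell$, hence of $H$.
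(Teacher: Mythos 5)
Your proposal is correct and takes essentially the same approach as the paper: reduce to $I_b(V)\subseteq I_2(H)$ when $a=2$, $b\ge 3$, pick out two rows of the chosen $b\times b$ submatrix that lie in the same block $X_\ell$ by pigeonhole, and Laplace-expand along those two rows so that each term carries a $2$-minor of $X_\ell\subseteq H$ as a factor. Your explicit transposition reduction for the symmetric case $b=2$, $a\ge 3$ is a slightly cleaner packaging of the paper's remark that ``the constructions are similar,'' but the argument is the same.
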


\begin{proof}
For convenience, use the notation $X = X_1$ and $Y = Y_1$ for the two matrices whose horizontal and vertical concatenations are $H$ and $V$, respectively.  Suppose $a = 2$ and $b>2$. We will show that $I = I_2(H)$ by showing that $I_b(V) \subseteq I_2(H)$.  Fix a $b$-minor $\delta$ of $V$.  Then $\delta$ is determined by a choice of $b$ rows and $b$ columns of $V$.  Because $b>2$, by the pigeon hole principle, at least one of the matrices $X$ or $Y$ contains at least two of the rows determining $\delta$.  If at least two of the rows determining $\delta$ are rows of $X$, fix two such rows and form the submatrix $Z$ of $X$ whose rows are exactly those two rows and whose columns are the $b$ columns determining $\delta$  Let $W$ be the submatrix of $V$ whose rows are the $b-2$ rows of $V$ determining $\delta$ other than the $2$ involved in $Z$ and whose columns are the $b$ columns determining $\delta$.  Observe that $I_2(Z) \subseteq I_2(X) \subseteq I_2(H)$.  Then $\delta = \sum_{\substack{\gamma \in I_2(Z)\\ \varepsilon \in I_{b-2}(W)}} \gamma \cdot \varepsilon \in I_2(H)$ because each summand of our expression of $\delta$ has a factor that is an element of $I_2(H)$.  

The constructions are similar to the case above if we instead fix two rows determining $\delta$ that are rows of $Y$ and in the cases for $a>2$ and $b=2$.
\end{proof}

\section{Empirical Testing}\label{sect: testing}

We conclude by describing a conjecture in commutative algebra for which toric double determnantal ideals are a natural testing ground.  We provide some preliminary steps in conducting such tests.  For a local ring $T$ of dimension $d$ with maximal ideal $\mu$ and $\mu$-primary ideal $J$ of $T$, let $e(J)$ denote the Hilbert--Samuel multiplicity of $J$, and let $\ell(T/J)$ denote the length of $T/J$, i.e. the $T/\mu$-vector space dimension of $T/J$, which is finite.  For background on the Hilbert--Samuel multiplicity, we refer the reader to \cite[Chapter 12]{Eis95}.  Lech's inequality states that $e(J) \leq d! e(m) \ell(T/J)$ \cite[Theorem 3]{Lec60} and has seen various improvements and extensions since its original appearance (see, for example, \cite{Han02, HSV17, KMQ+20}).  Recently, it was conjectured that an asymptotic version of Lech's inequality holds when $T_P$ is regular for every prime $P \neq \mu$.  The precise statement of the conjecture is that, in that case, $\displaystyle \lim_{N \rightarrow \infty} $$\sup_{\substack {\sqrt{J} = \mu  \\ \ell(T/J)>N}} \left\{\frac{e(J)}{d! \ell(T/J)} \right\} = 1$ \cite[Conjecture 1.2 (a)]{HMQS}.  Considering the careful choice of powers of $\mu$ as the ideals $J$ shows that this limit must be at least $1$, and so the question is whether for every $\varepsilon>0$, there exists $N \gg 0$ such that for any $\mu$-primary ideal $J$ with $\ell(T/J) > N$, we have $e(J) \leq d!(1 + \varepsilon)\ell(T/J)$. Otherwise, and less precisely, stated, this conjecture asserts no $\mu$-primary ideals give asymptotically larger values of $\frac{e(J)}{d! \ell(T/J)}$ than powers of $\mu$, that the powers of $\mu$, in the limit, serve as a weak bound for this ratio.

Because the localizations of the homogeneous coordinate rings of toric double deteminantal varieties at their non-maximal homogeneous prime ideals are regular by Theorem \ref{thm: smooth}, it follows from homogeneity that they satisfy the full hypothesis of this conjecture and so that they are an appropriate example to use to explore it.  We report results consistent with the truth of the conjecture in the smallest nontrivial example of a toric double determinantal variety.  Because the conjecture is known over fields of prime characteristic when $T/\mu$ is perfect \cite[Corollary 4.4]{HMQS}, our computations, done in Macaulay2 \cite{M2} are over the rationals.  

Because these computations are both challenging theoretically and also computationally costly, we considered the smallest nontrivial example: the case of 2 matrices $X$ and $Y$, each of dimension $2 \times 2$.  With notation as in Notation \ref{notation}, we generated $175$ ideals $J$ of $R$ using the $x_{ij}^{12}$ and $y_{ij}^{12}$ for all $1 \leq i,j \leq 2$ together with between $1$ and $6$ additional monomial generators each of degree at most $10$ produced by the \texttt{randomMonomialIdeal} command in the  \texttt{RandomIdeals} package.  Because every $\mu$-primary ideal contains a power of each of the variables, it is not artificial to include some power of each $x_{ij}$ and $y_{ij}$.  The reason for choosing that power to be $12$ and for choosing the other generators of $J$ to be monomials in $R$ is to control the timing required to compute $e(JS)$ (where the multiplicity is understood to be over $S$).  Scatterplots showing ordered pairs $(\ell(S/JS), e(JS)/d!\ell(S/JS))$ (Figure 1) and $($number of random monomial generators, $e(JS)/d!\ell(S/JS))$ (Figure 2) appear below.  In both cases, we report decimal truncations of the ratios $e(JS)/d!\ell(S/JS))$.  Conjecture \cite[Conjecture 1.2 (a)]{HMQS} sounds plausible in light of these data.  Note that the Hilbert--Samuel multiplicity of $S$ is $6$ and that $\dim(S) = 4$.  With $\mu = (x_{11}, \ldots ,y_{22})$, the homogeneous maximal ideal of $S$, we provide for scale a table of values $\ell(S/\mu^k)$ and $\frac{e(\mu^k)}{d! \ell(S/\mu^k)}$ for $1 \leq k \leq 12$.  

\begin{center}
\begin{tabular}{ |c|c|c|c|c|c|c|c|c|c|c|c|c| } 
 \hline
$k$ & 1 & 2 & 3& 4 & 5&6&7&8&9&10&11 &12\\ 
 \hline
 $\ell(S/\mu^k)$ &1& 9& 36& 100& 225& 441& 784& 1296& 2025& 3025& 4356& 6084 \\ 
 \hline
$\frac{e(\mu^k)}{4! \ell(S/\mu^k)}$ & .25& .444& .562& .64& .694& .734& .766& .79& .81& .826& .84& .853 \\ 
 \hline
\end{tabular}
\end{center}

\bigskip

Data on the ideals described in the paragraph above appear below.  Figure 1 shows that, although the ratio $e(JS)/d!\ell(S/JS)$ may tend to grow with $\ell(S/JS)$, those ratios remain much smaller than the expected asymptotic bound given by the powers $\mu^k$ of $\mu$ for similar $\ell(S/JS)$ and $\ell(S/\mu^k)$.  Figure 2 shows that that is variability among the $e(JS)/d!\ell(S/JS)$ for a fixed number of random monomial generators.

\bigskip

\hspace{-0.5cm}
\begin{minipage}{2in}
\begin{tikzpicture}
\begin{axis}[
    title = {Figure 1},
    xlabel={$\ell(S/JS)$ },
    ylabel={$\dfrac{e(JS)}{4!\ell(S/JS)}$},
    xmin=0, xmax=40000,
    ymin=0, ymax=.2,
    xtick={0,10000,20000,30000,40000},
    ytick={0,.1,.2},
    legend pos=north west,
    ymajorgrids=true,
    grid style=dashed,
]

\addplot[
    color=black,
    mark=x,
    only marks,
    scatter,
    colormap/blackwhite
    ]
    coordinates {
    (33238,.104)(6480,.0667)(12168,.071)(33238,.104)(6480,.0667)(12168,.071)(36780,.117)(25129,.0859)(33200,.104)(6480,.0667)(17128,.0757)(12167,.071)(6480,.0667)(6480,.0667)(25170,.0858)(21428,.0807)(31029,.0974)(21442,.0806)(28345,.0914)(21434,.0806)(25157,.0858)(17128,.0757)(33269,.104)(17129,.0757)(30961,.0977)(33696,.103)(12167,.071)(12168,.071)(27640,.101)(5151,.0652)(10233,.0703)(21380,.0892)(4308,.0613)(6049,.0675)(5835,.0658)(16758,.0759)(4944,.0656)(28310,.103)(28920,.103)(16160,.078)(22739,.087)(9517,.0706)(5154,.0652)(21588,.0884)(19314,.0808)(17158,.0825)(29636,.0996)(7206,.0699)(22697,.0936)(4941,.0656)(10920,.0703)(17111,.0828)(15705,.0825)(11271,.0773)(6009,.0692)(9869,.0706)(22848,.105)(9537,.0718)(14377,.0793)(11266,.0767)(5812,.0637)(17363,.0857)(13611,.0793)(20014,.0863)(26337,.109)(13211,.0771)(5479,.0679)(4482,.0642)(10831,.0709)(11099,.0712)(4205,.0627)(4049,.0649)(3614,.0631)(5188,.0662)(3576,.0632)(3897,.0647)(12044,.0757)(9840,.0707)(17450,.101)(19407,.101)(1141,.0537)(2968,.0616)(18190,.0856)(13073,.0862)(10511,.0864)(24803,.104)(9834,.0808)(6901,.0691)(601,.0466)(12785,.0798)(3742,.0665)(9395,.0728)(2897,.0649)(22125,.105)(18413,.0978)(8970,.0776)(8290,.0728)(14425,.0848)(12905,.0818)(3391,.0602)(3097,.0657)(1420,.0634)(4234,.0652)(7110,.0708)(19922,.102)(11463,.0797)(9865,.0765)(14563,.0989)(11906,.0829)(3002,.0599)(6700,.0677)(5441,.0717)(7362,.0793)(380,.0523)(4448,.0751)(2421,.0612)(3635,.0671)(2594,.064)(15022,.0876)(1025,.0539)(6834,.0747)(3287,.0656)(14601,.0933)(5667,.0693)(4366,.0688)(3654,.067)(11989,.0782)(3825,.0681)(15956,.0938)(332,.0474)(1257,.0542)(8512,.0834)(17822,.0909)(4451,.0708)(9146,.0801)(2123,.0599)(7300,.084)(2517,.0624)(6747,.0953)(2331,.0601)(6018,.0745)(3430,.0654)(11584,.087)(2300,.0687)(2091,.064)(2217,.0596)(3306,.0684)(3105,.0671)(2522,.0671)(2996,.0673)(4524,.0671)(9062,.0792)(550,.0563)(3500,.0691)(433,.0497)(3333,.0621)(3581,.067)(7804, .0819)(11917,.0826)(25970,.123)(1373,.0618)
    (10758,.0736)(5256,.0653)(7663,.0704)(11832,.0822)(10197,.0706)(2464,.0585)(17092,.0929)(20108,.0963)(12777,.078)(9666,.072)(17944,.0836)(10077,.0756)(19059,.0831)(15885,.0778)
    };
    
\end{axis}
\end{tikzpicture}
\end{minipage}

\vspace{-2.545in}
\hspace{7.25cm}
\begin{minipage}{2in}
\begin{tikzpicture}
\begin{axis}[
    title = {Figure 2},
    xlabel={Number of random monomials},
    ylabel={$\dfrac{e(JS)}{4!\ell(S/JS)}$},
    xmin=0, xmax=7,
    ymin=0, ymax=.2,
    xtick={0,1,2,3,4,5,6,7},
    ytick={0,.1,.2},
    legend pos=north west,
    ymajorgrids=true,
    grid style=dashed,
]

\addplot[
    color=black,
    mark=x,
    only marks,
    scatter,
    colormap/blackwhite
    ]
    coordinates {
    (1,.104)(1,.0667)(1,.071)(1,.104)(1,.0667)(1,.071)(1,.117)(1,.0859)(1,.104)(1,.0667)(1,.0757)(1,.071)(1,.0667)(1,.0667)(1,.0858)(1,.0807)(1,.0974)(1,.0806)(1,.0914)(1,.0806)(1,.0858)(1,.0757)(1,.104)(1,.0757)(1,.0977)(1,.103)(1,.071)(1,.071)
    (2,.101)(2,.0652)(2,.0703)(2,.0892)(2,.0613)(2,.0675)(2,.0658)(2,.0759)(2,.0656)(2,.103)(2,.103)(2,.078)(2,.087)(2,.0706)(2,.0652)(2,.0884)(2,.0808)(2,.0825)(2,.0996)(2,.0699)(2,.0936)(2,.0656)(2,.0703)(2,.0828)(2,.0825)
    (3,.0773)(3,.0692)(3,.0706)(3,.105)(3,.0718)(3,.0793)(3,.0767)(3,.0637)(3,.0857)(3,.0793)(3,.0863)(3,.109)(3,.0771)(3,.0679)(3,.0642)(3,.0709)(3,.0712)(3,.0627)(3,.0649)(3,.0631)(3,.0662)(3,.0632)(3,.0647)(3,.0757)(3,.0707)
    (4,.101)(4,.101)(4,.0537)(4,.0616)(4,.0856)(4,.0862)(4,.0864)(4,.104)(4,.0808)(4,.0691)(4,.0466)(4,.0798)(4,.0665)(4,.0728)(4,.0649)(4,.105)(4,.0978)(4,.0776)(4,.0728)(4,.0848)(4,.0818)(4,.0602)(4,.0657)(4,.0634)(4,.0652)
    (5,.0708)(5,.102)(5,.0797)(5,.0765)(5,.0989)(5,.0829)(5,.0599)(5,.0677)(5,.0717)(5,.0793)(5,.0523)(5,.0751)(5,.0612)(5,.0671)(5,.064)(5,.0876)(5,.0539)(5,.0747)(5,.0656)(5,.0933)(5,.0693)(5,.0688)(5,.067)(5,.0782)(5,.0681)(5,.0938)(5,.0474)(5,.0542)(5,.0834)(5,.0909)(5,.0708)(5,.0801)(5,.0599)(5,.084)
    (6,.0624)(6,.0953)(6,.0601)(6,.0745)(6,.0654)(6,.087)(6,.0687)(6,.064)(6,.0596)(6,.0684)(6,.0671)(6,.0671)(6,.0673)(6,.0671)(6,.0792)(6,.0563)(6,.0691)(6,.0497)(6,.0621)(6,.067)(6, .0819)(6,.0826)(6,.123)(6,.0618)
    (3,.0736)(3,.0653)(3,.0704)(3,.0822)(3,.0706)(3,.0585)(3,.0929)(3,.0963)(3,.078)(3,.072)(3,.0836)(3,.0756)(3,.0831)(3,.0778)
    };
    
\end{axis}
\end{tikzpicture}
\end{minipage}


\begin{thebibliography}{99}

\bibitem{CLS11} D. Cox , J. Little, and H. Schenck, {Toric varieties}. American Mathematical Soc., (2011).

\bibitem{Eis95} D. Eisenbud, \emph{Commutative Algebra: with a view toward algebraic geometry}, Springer-Verlag, New York (1995).

\bibitem{FK} N. Fieldsteel and P. Klein, \emph{Gr\"obner bases and the Cohen--Macaulay property of Li's double determinantal varieties}, arXiv:1906.06817. 

\bibitem{Gin} V. Ginzburg, \emph{Lectures on Nakajima's quiver varieties}, arXiv:0905.0686.

\bibitem{M2} D.Grayson and M. Stillman. Macaulay2, a software system for research in algebraic geometry. Available at http://www.math.uiuc.edu/Macaulay2/.

\bibitem{Han02} D. Hanes, \emph{Bounds on multiplicities of local rings}, Comm. Algebra 30 (2002), 3789--3812.

\bibitem{HMQS} C. Huneke, L. Ma, P. H. Quy, and I. Smirnov, \emph{Asymptotic Lech's inequality}, 	arXiv:1907.08344.  

\bibitem{HSV17} C. Huneke, I. Smirnov, and J. Validashti, \emph{A generalization of an inequality of {L}ech relating multiplicity and colength}, arXiv:1711.06951.

\bibitem{KMQ+20} P. Klein, L. Ma, P. H. Quy, I. Smirnov, Y. Yao, \emph{Lech's inequality, the St\"{u}ckrad--Vogel conjecture, and uniform behavior of Koszul homology}, Adv. Math., 347 (2019), 442--472.

\bibitem{Lec60} C. Lech,  \emph{Note on multiplicities of ideals}, Ark. Mat. 4 (1960), 63--86.

\bibitem{Li} L. Li, \emph{Nakajima's quiver varieties and triangular bases of cluster algebras}, in preparation.

\bibitem{Nak} H. Nakajima, \emph{Introduction to quiver varieties -- for ring and representation theoriests}, arXiv:1611.10000.

\end{thebibliography}
\end{document}